\newtheorem{thm}{Theorem}
\newtheorem{thmn}{Theorem} 
\newtheorem{defn}{Definition}
\newtheorem{remark}{Remark}
\newtheorem{lemma}{Lemma}
\newtheorem{proposition}{Proposition}
\newtheorem{corollary}{Corollary}
\newcommand{\ZZ}{\mathds{Z}}
\newcommand{\mF}{\mathcal{F}}
\newcommand{\laplace}{\mathcal{L}}
\newcommand{\laplacex}{\laplace_{x=1}}
\author{Eva-Maria Hainzl}
\title{Formulas and asymptotics of hypergraph Catalan numbers}
\date{}
\begin{document}
\maketitle
\begin{abstract}
\emph{Tree walks} are a class of closed walks on a complete graph constrained to span trees. They appear in the computation of moments of the spectral measure of various random matrix models, most prominently in the spectral distribution of random graphs. In this work, we focus on a special subclass called \emph{$k$-tours}, which were introduced by Gunnells~\cite{gunn} after studying another random matrix model. They are enumerated by the so-called hypergraph Catalan numbers $ c_n^{(k)}$. Gunnells conjectured an asymptotic formula for $c_n^{(k)}$, which we confirm through an alternative approach to their enumeration. As it turns out, the asymptotic growth is governed by the number of $k$-tours on \emph{star-like} trees.
\end{abstract}

\section{Introduction}

\emph{Tree walks} are a class of closed walks on a complete graph constrained to span trees. To be precise, a \emph{tree walk} of length $2\ell$ is a closed sequence of vertices  
\[
    W = (v_1, v_2, \dots, v_{2\ell}, v_1)
\]  
such that the induced graph $T(W)$ on the visited vertices forms a tree. 
They were formally introduced in~\cite{pana_ha} with the motivation to understand the moments of the distribution of the eigenvalues of the adjacency matrix of the random graph $G(n,c/n)$ as $n$ tends to infinity. The enumeration of walks spanning trees however has a long history in spectral graph theory and was in particular used in~\cite{physics, inna_z} and various recent works~\cite{Bushygraphs, spec2, spec1,spec3,spec4}.

The main asymptotic growth of the moments of this distribution is governed by the asymptotic growth of tree walks and the edge probability $c/n$ amounts to a weight for the \emph{excess} of the tree walk, which is defined as  
\[
    \xi(W) = \ell - |E(T(W))|,
\]  
where $E(T(W))$ is the edge set of the induced tree $T(W)$. A key result of~\cite{pana_ha} established a connection between the generating function of tree walks and the classical \emph{Catalan generating function}  
\[
    C(z) = \frac{1 - \sqrt{1 - 4z}}{2z}
\]  
which is well-known to enumerate contour walks on trees with $n+1$ vertices~(see e.g.~\cite{Catalan}).
More precisely, if $W_{\xi}(z)$ is the generating function of tree walks with excess $\xi$, where $z$ marks the length of the walk, then
\[
    W_{\xi}(z) = C(z) \sum_{s=0}^{2\xi - 2} \frac{K_{\xi,s}(z C(z)^2)}{(1 - z C(z)^2)^{s+1}},
\]  
where $K_{\xi,s}(x)$ are polynomials with non-negative integer coefficients.\\

A special subclass of tree walks consists of those in which each edge is traversed \emph{exactly} the same number of times. These were introduced by Gunnells~\cite{gunn} as $a_T^{(k)}$-tours of a fixed tree $T$ and appeared in the study of a specific matrix model (see~\cite{matrixmod}). We will therefore refer to tree walks which cross each edge of the induced tree exactly $k$ times as \emph{$k$-tours}. 

These $k$-tours on trees with $n+1$ vertices are enumerated by the \emph{hypergraph Catalan numbers} $c_n^{(k)}$ (see \cite[Definition~2.5]{gunn}).  
They are a generalization of the classical Catalan numbers, extending their combinatorial interpretation to special enumerated trees, lattice walks, triangulations, polygon gluings and more. Gunnells found a formal way of computing $c_n^{(k)}$ and conjectured an asymptotic growth formula for them. In this paper, we confirm Gunnells’ conjecture by employing a different perspective based on the enumeration of general tree walks. Our main result is thus the following theorem.

\begin{thm}
    Let $k\geq 1$ and $c_n^{(k)}$ be the number of $k$-tours on trees with $n+1$ vertices. Then
    \begin{align*}
        c_n^{(1)} = \frac{1}{n+1}\binom{2n}{n}\sim \sqrt{\frac{1}{\pi n^3}} \,4^n,\qquad
        c_n^{(2)} \sim 2\,\sqrt{\frac{e^{3}}{\pi n}} \, 2^{n}n!
    \end{align*}
    and for $k\geq 3$, 
    \begin{align*}
        c_n^{(k)} \sim 2\,\sqrt{\frac{k}{(2\pi n)^{k-1}}}\left(\frac{k^k}{k!}\right)^n(n!)^{k-1},
    \end{align*}
    as $n\rightarrow \infty$.
\end{thm}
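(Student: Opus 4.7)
The plan is to build on the generating function framework for tree walks mentioned in the introduction, refined to isolate walks that cross each edge exactly $k$ times. First, I would derive a combinatorial formula for $c_n^{(k)}$ by decomposing a $k$-tour on a fixed tree $T$ into local data at each vertex. Concretely, at each vertex $v$ of degree $d = d_T(v)$, the walk records the interleaving of its $kd$ outgoing edge-visits (or $2kd$, according to the convention), and the arrangement at $v$ contributes a multinomial-style factor depending only on $d$. Summing over labeled trees with $n+1$ vertices should yield an expression of the shape
\[
c_n^{(k)} \;=\; \sum_{T} \prod_{v \in V(T)} f_k(d_T(v)),
\]
with $f_k$ an explicit local factor built from factorials.

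Next, I would reorganize this sum by degree sequence and convert it into an analytic generating function. This is where the framework of \cite{pana_ha} enters: the decomposition of tree walks via the Catalan generating function $C(z)$ already encodes the local arrangement data, and specializing to the $k$-tour subclass should express $c_n^{(k)}$ either as a coefficient extraction in a suitable refinement of $W_\xi(z)$, or as a $(k-1)$-fold iterated integral/residue. The structure of the conjectured asymptotic, with its prefactor $2\sqrt{k/(2\pi n)^{k-1}}$, strongly suggests a Gaussian saddle-point in $k-1$ dimensions.

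The asymptotic extraction then proceeds by identifying the dominant contribution. For $k=1$ the generating function degenerates to $C(z)$ and the standard square-root singularity gives the Catalan asymptotics. For $k \geq 2$, the growth is super-exponential in $n$ because of the factorials in $f_k$, so I would rescale (working with an exponential generating function divided by $(n!)^{k-1}$) and apply saddle-point analysis. I expect the dominant saddle to correspond to degree sequences concentrated at a single high-degree hub, which is precisely the star-like regime; the star with $n$ leaves already contributes $(kn)!/(k!)^n$, which Stirling converts into the claimed leading order $(k^k/k!)^n (n!)^{k-1}$. The $k=2$ formula, with its extra $e^{3/2}$ constant, should arise as a correction from the finitely many star-like configurations that survive at this order.

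\textbf{Main obstacle.} The hard part will be showing that the star-like trees actually dominate and extracting the correct constant. The sum over tree shapes contains many configurations whose individual contributions are a priori comparable, so one must carry out a large-deviations estimate on the degree sequence, proving that deviations from a ``one hub, $n$ leaves'' profile cost an exponential factor. Once concentration is established, a local Gaussian expansion around the star profile should deliver the sharp prefactor $2\sqrt{k/(2\pi n)^{k-1}}$, but controlling the subleading corrections uniformly is delicate because the relevant generating function is entire and highly non-polynomial in the degree-sequence variables.
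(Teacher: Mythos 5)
Your plan follows essentially the same route as the paper: the local factor at a vertex of degree $d$ is $\frac{1}{d!}\binom{kd}{k,\dots,k}$, summing over trees with a prescribed degree sequence yields exactly the closed formula of Theorem~\ref{lem:closed_form}, and the asymptotics are indeed driven by the star --- rooted either at the hub or at a leaf, which is where the prefactor $2$ comes from. One inaccuracy: for $k=2$ the constant $\sqrt{e^3}$ does not arise from ``finitely many'' surviving configurations but from summing the contributions $\frac{1}{m!}(3/2)^m$ of star-like trees with $m$ subdivided edges over all $m=o(\sqrt{n})$, giving $e^{3/2}$. The more serious issue is that the step you defer as the ``main obstacle'' is where nearly all of the work lies, and your proposal offers no mechanism for it. The paper does not carry out a Gaussian saddle-point or large-deviations analysis on the degree sequence; instead it splits the sum according to whether the maximum degree exceeds $n-n^{1/3}$, handles the near-star regime by a term-by-term comparison (repeatedly merging the smallest non-leaf outdegree into the hub, each merge costing a factor of order $(c/n)^{k-1}$, together with a rerooting lemma reducing to root degree $1$ and a crude bound on the number of partitions of $c$), and disposes of the low-degree regime by bounding $[x^ny^{n-\ell}]e^{xf(y)}$ at the explicit point $x_0=n$, $y_0=k!e^{k-1}/(k^kn^{k-1})$. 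So the skeleton of your plan is correct and matches the paper, but the concentration estimate you gesture at is the actual content of the theorem and would still need to be supplied in full.
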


In the following section, we focus on counting $k$-tours, providing an explicit formula (Theorem~\ref{lem:closed_form}) by using elementary counting arguments and via an alternative approach through generating functions. Subsequently, we conduct an asymptotic analysis of this formula in Section~3 highlighting that the main contribution to the asymptotic growth of $k$-tours arises from star-like trees~(Proposition~\ref{prop:star}). 
\section{Counting $k$-tours}

While the exact enumeration of general tree walks is still challenging, $k$-tours are a relatively simple subclass since they cross each edge the same number of times and their length and excess therefore solely depends on the size of the tree along which they walk. Indeed, it is possible to count $k$-tours on trees with $n+1$ vertices with elementary arguments as the first proof of the following theorem shows. Subsequently, we provide a second proof of the formula which relies on the same steps which were conducted in the recursive decomposition of tree walks in~\cite{pana_ha}.

\begin{thm}\label{lem:closed_form}
    Let $k\geq 1$ and $c_n^{(k)}$ be the number of $k$-tours on trees with $n+1$ vertices. Then
    \[
        c_n^{(k)} = \sum_{\ell=1}^{n} 
        \sum_{\substack{(n_0,n_1,\dots)\in T_n(\ell)}}
        \frac{\ell}{n}\binom{n}{n_0,n_1,n_2,\dots}
        \frac{1}{\ell!}\binom{\ell k}{k,k,\dots,k} \prod_{i\geq 1}\left(\frac{1}{(i+1)!}\binom{(i+1)k}{k,k,\dots,k,k}\right)^{n_i}
    \]
    where $T_n(\ell) = \left\{(n_0,n_1,\dots, n_{n-1}) \in \left(\ZZ_{\geq 0}\right)^{n} \mid \sum_{i=0}^{n-1} n_i = n, \sum_{i=0}^{n-1} i n_i = n-\ell\right\}$ is the set of degree sequences of trees with root degree $\ell$ and $\frac{\ell}{n}\binom{n}{n_0,n_1,n_2,\dots}$ the number of trees with the corresponding degree sequence.
\end{thm}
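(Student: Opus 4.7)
The plan is to prove the formula via the generating-function framework of~\cite{pana_ha}, by decomposing a $k$-tour recursively into local contributions at each vertex and then applying the Lagrange--Bürmann inversion formula.

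First I would introduce a generating function $U=U(z)$ counting ``tour fragments'' on rooted subtrees: given a subtree $S$ rooted at a vertex $v$ with parent $p$ outside $S$, the portion of the $k$-tour inside $S$ is an independent combinatorial object consisting of the local structure at $v$ together with an independent tour fragment on each of $v$'s child-subtrees. At $v$, which has $i$ children, the local data is a departure sequence of length $(i+1)k$ over the $i+1$ neighbors, using each letter exactly $k$ times and ending with ``to parent''. Since the arrival at each visit is forced by the preceding departure, such sequences are counted by $\tfrac{1}{i+1}\binom{(i+1)k}{k,\dots,k}$, which gives the functional equation
\begin{equation*}
U \;=\; z\sum_{i\geq 0}\frac{1}{i+1}\binom{(i+1)k}{k,\dots,k}\frac{U^i}{i!}.
\end{equation*}

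Next, because the starting vertex has no parent, the terminal constraint disappears and the local factor at a root of degree $\ell$ is simply $\binom{\ell k}{k,\dots,k}$. Setting
\begin{equation*}
V \;=\; \sum_{\ell\geq 0}\binom{\ell k}{k,\dots,k}\frac{U^\ell}{\ell!},
\end{equation*}
I would identify $c_n^{(k)}=[z^n]V$; the case $k=1$ serves as a sanity check since the equation reduces to $U=z/(1-U)$, giving $U=zC(z)$ and $V=C(z)$, consistent with $c_n^{(1)}=\operatorname{Cat}_n$. Writing $U=z\Phi(U)$ with $\Phi(y)=\sum_{i\geq 0}\tfrac{1}{i+1}\binom{(i+1)k}{k,\dots,k}\tfrac{y^i}{i!}$, the Lagrange--Bürmann formula yields
\begin{equation*}
[z^n]\frac{U^\ell}{\ell!} \;=\; \frac{\ell}{n\,\ell!}\,[y^{n-\ell}]\Phi(y)^n,
\end{equation*}
and expanding $\Phi(y)^n$ by the multinomial theorem produces a sum indexed by tuples $(n_0,n_1,\dots)$ with $\sum_i n_i = n$ and $\sum_i i n_i = n-\ell$, which is exactly $T_n(\ell)$. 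Using the identity $\tfrac{1}{(i+1)\cdot i!}=\tfrac{1}{(i+1)!}$ to combine the factor $\tfrac{1}{i+1}$ from $\Phi$ with the factor $\tfrac{1}{i!}$ from the multinomial expansion assembles everything into the stated formula.

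The main obstacle is the combinatorial claim underlying the definition of $U$: that the local departure sequence at a non-root vertex with $i$ children is counted by exactly $\tfrac{1}{i+1}\binom{(i+1)k}{k,\dots,k}$. This requires checking that (i) the arrival type at each visit is forced by the previous departure, so the local structure is encoded entirely by a departure word of length $(i+1)k$ over the $i+1$ neighbors with $k$ occurrences of each letter; (ii) the first arrival at $v$ is automatically from the parent as a consequence of the global walk structure; and (iii) the only remaining constraint is that the terminal departure be toward the parent, which reduces the free count by the factor $\tfrac{1}{i+1}$ by symmetry among the $i+1$ neighbors. Once this local count is verified, the remainder of the proof is a straightforward application of Lagrange inversion together with the multinomial expansion.
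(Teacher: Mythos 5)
Your proposal is correct and follows essentially the same route as the paper: the functional equation $U=z\Phi(U)$ with $\Phi(y)=\sum_{i\ge0}\tfrac{1}{(i+1)!}\binom{(i+1)k}{k,\dots,k}y^i$ is exactly the equation $A(z)=z\phi(A(z))$ obtained in the paper (there via a bivariate generating function and a Laplace operator, but resting on the same local count of departure words with the parent forced last, which is the paper's first proof), and your Lagrange--Bürmann step is identical to the paper's alternative proof. The only point you should still spell out is the converse direction of the decomposition — that every assignment of admissible departure words at the vertices reconstructs to a unique valid $k$-tour — which the paper handles with an explicit reconstruction algorithm.
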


\begin{remark}
    The formula in Theorem~\ref{lem:closed_form} holds for all $k\geq 1$. If we set $k=1$, we obtain
        \begin{align*}
            c^{(1)}_n &= \sum_{\ell \geq 1} \sum_{\substack{(n_0,n_1,\dots) \in T_n(\ell)}} \frac{\ell}{n}\binom{n}{n_0,n_1,n_2,\dots}
        \end{align*}
    which, by Theorem 6.4. in~\cite{gess} equals the sum over all plane forests with $\ell$ trees and $n$ vertices - or equivalently the sum over all plane trees with $n+1$ vertices with root degree $\ell$.
    
    The number of plane trees on $n+1$ vertices in turn equals the $n$-th Catalan number $c_n = c_n^{(1)}$.
\end{remark}

\begin{proof}[Proof of Theorem~\ref{lem:closed_form}]
    Each $k$-tour on a rooted tree defines a canonical embedding of the tree in the plane, given by the order in which the tour discovers the vertices. So, we may think about $k$-tours as walks on rooted plane trees which start and end at the root, cross each edge exactly $2k$ times and the order in which outgoing edges are crossed the first time at a vertex $v$, is the order from left to right of the edges.
    Thus, if $k=1$, this condition uniquely determines the walk and therefore $c_n^{(1)} = c_n$, where $c_n$ is the $n$-th Catalan number which counts the number of plane trees with $n+1$ vertices.
    
    If $k>1$, we notice that the order in which the walk leaves each vertex fully determines the walk. 
    \begin{figure}
    \begin{subfigure}{0.32\textwidth}
        \centering
        \includegraphics[width=1\linewidth, page=1]{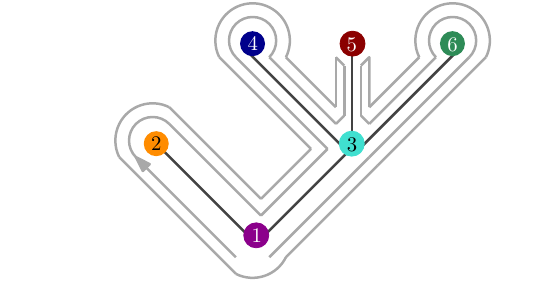}
        \caption{The walk $W$}
        \label{fig:greytour}
    \end{subfigure}
    \begin{subfigure}{0.32\textwidth}
        \centering
        \includegraphics[width=1\linewidth, page=2]{figures/ktours.pdf}
        \caption{Outgoing steps of $W$}
        \label{fig:coltour}
    \end{subfigure}
    \begin{subfigure}{0.32\textwidth}
        \centering
        \includegraphics[width=1\linewidth, page=3]{figures/ktours.pdf}
        \caption{$W$ as sequences}
        \label{fig:permtour}
    \end{subfigure}
    \caption{The tree walk $W = (1,2,1,3,4,3,5,3,4,3,1,2,1,3,6,3,5,6,3,1)$ separated into outgoing steps at vertices and into sequence of vertices} \label{fig:ktours}
    \end{figure}
    Figure~\ref{fig:ktours} illustrates how one can separate a tree walk into sequences of outgoing steps at each step. One simply records at each vertex $v$ the sequence $S_v$ of neighbors in which they are ($k$-times) visited starting from $v$. 
    For an example, see Figure~\ref{fig:ktours} where we record $S_1 = (2,3,2,3)$ at the root $1$ for the walk $W = (1,2,1,3,4,3,5,3,4,3,1,2,1,3,6,3,5,6,3,1)$.

    The resulting sequence for a vertex $v$ is a sequence $S_v = (s_{v}(1),s_{v}(2),\dots,s_v{(kd_v)})$ which contains exactly $k$ occurrences of each of its $d_v$ neighbors, the first occurrences of its children correspond to their order in the tree and the last vertex in the sequence is its parent. \\
    
    Given a rooted plane tree (w.l.o.g. with root vertex $1$), where each vertex $v$ is assigned a sequence with the above properties, we can reconstruct $W$ by going through the sequences step-by-step, as the following algorithm shows.\\
    
    \noindent 1\qquad Initalize $W=\emptyset$, $v=1$\\
    2\qquad WHILE $v\ne 0$ DO:\\
    3\qquad \qquad Append $v$ to $W$\\
    4\qquad \qquad IF $s_v(1) \ne \emptyset$ THEN:\\
    5\qquad \qquad \qquad Delete $s_v(1)$ from $S_v$\\
    6\qquad \qquad \qquad Set $v=s_v(1)$\\
    7\qquad\qquad ELSE:\\
    8\qquad \qquad \qquad Set $v=0$\\
    9\qquad RETURN $W$\\

    This reconstruction is always possible and uniquely reproduces the walk.
    In order to enumerate all $k$-tours, we therefore fix a plane tree $T$ and locally count the number of ways how the $k$-tour could leave each vertex. 

    So, let $d_v$ be the degree of $v\in V$. The walk leaves $v$ exactly $kd_v$ times and the number of ways to distribute these (ordered) steps on $d_v$ edges is 
    \[\binom{kd_v}{k,k,\dots,k}.\]
    However, these edges appear in a specific order in the underlying tree. Since there are $d_v!$ ways to permute the order of the edges, the number of ways to distribute the outgoing steps at $v$ is therefore
    \[
        \frac{1}{d_v!}\binom{kd_v}{k,k,\dots,k}.
    \]

    Hence, if the root of $T$ has degree $\ell$ and there $n_i$ further vertices with degree $i+1$ for each $i\in \{1,2,\dots,n-1\}$, then the number of $k$-tours on $T$ is
    \[
        \frac{1}{\ell!}\binom{\ell k}{k,k,\dots,k}\prod_{i\geq0}\left(\frac{1}{(i+1)!}\binom{(i+1)k}{k,\dots,k}\right)^{n_i}.
    \]
    Further, by~\cite[Theorem 6.4]{gess} there are 
    \[
        \frac{\ell}{n}\binom{n}{n_0,n_1,\dots}
    \]
    rooted trees with root degree $\ell$ and $n_i$ interior vertices with $i$ children respectively. Summing over all possible degree sequences results in the formula in Theorem~\ref{lem:closed_form}.
\end{proof}

An alternative proof of Theorem~\ref{lem:closed_form} can be achieved via an approach by generating functions as the following lemma shows.

\begin{lemma}\label{lem:hyper}
    Let $c^{(k)}_{j,n}$ denote the number of $k$-tours on a tree with $n+1$ vertices and a root with degree $j$. Let further
    \[ 
        C_{k}(x,z) = \sum_{j,n\geq 0} c^{(k)}_{j,n} \frac{x^{kj}}{(kj)!}z^{n+1}
    \]
    denote their generating function where $z$ marks the size of the tree and $x$ marks the number of departing steps from the root. Then
    \[
        C_{k}(x,z) = z\exp\left(\frac{x^k}{k!} \laplace_{t=1}\left(\frac{t^{k-1}}{(k-1)!}C_{k}(t,z)\right)\right)
    \]
    where $\laplace_{t=1} \left( A(t) \right) = \sum_{k \geq 0} k!\, [t^k] A(t)$.
\end{lemma}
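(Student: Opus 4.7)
My plan is to derive the functional equation by a symbolic decomposition of a $k$-tour at its root, using the EGF dictionary (the root becomes an atom weighted by $z$, the unordered multiset of child subtrees becomes an $\exp$, and the interleaving of the root's outgoing steps with visits to children is tracked by the EGF variable $x$).

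I will decompose a rooted tree $T$ of size $n+1$ together with a $k$-tour $W$ on it into: the root (contributing $z$), an unordered multiset of $j$ rooted subtrees $T_{c_1},\dots,T_{c_j}$ (one for each child), and an outgoing sequence at the root of length $kj$ containing $k$ occurrences of each child. The key step is describing the structure on a single child's subtree $T_c$ induced by $W$. Since $W$ enters $T_c$ exactly $k$ times and each entry is a closed sub-walk on $T_c$ starting and ending at $c$, I will concatenate these $k$ sub-walks in their original order to obtain a single closed walk $W_c$ on $T_c$ that crosses each edge of $T_c$ exactly $k$ times in each direction, i.e., a $k$-tour on $T_c$. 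To invert this, I need only specify how to split the outgoing sequence of $c$ in $W_c$ (which has length $k j_c$, where $j_c$ is the degree of $c$ in $T_c$) into $k$ ordered pieces, one for each visit; this is an ordered composition of $k j_c$ into $k$ non-negative parts, with $\binom{k j_c + k-1}{k-1}$ possibilities. Thus the $z$-generating function for a single child-subtree together with its visit-split data is
\[
    S(z) = \sum_{j_c,\,n_c \geq 0} c^{(k)}_{j_c, n_c}\binom{k j_c + k-1}{k-1} z^{n_c+1}.
\]

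Assembling by the EGF dictionary, the unordered multiset of $j$ children gives an $\exp$, and each child's $k$ occurrences in the root's outgoing sequence of length $kj$ contribute $x^k/k!$, yielding
\[
    C_k(x,z) \;=\; z\exp\!\left(\frac{x^k}{k!}\, S(z)\right).
\]
It will then remain to recognize $S(z)$ as $\mathcal{L}_{t=1}\!\left(\tfrac{t^{k-1}}{(k-1)!}\,C_k(t,z)\right)$, which follows immediately from $\mathcal{L}_{t=1}(t^m) = m!$ together with $\tfrac{(k j_c + k - 1)!}{(k j_c)!(k-1)!} = \binom{k j_c + k-1}{k-1}$, giving the stated functional equation.

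The main obstacle will be to justify the visit-split bijection cleanly: I must argue that every composition of $k j_c$ into $k$ non-negative parts does correspond to a valid way to split $W_c$ into $k$ visits, and vice versa. This reduces to the observation that within a single visit, the walk may return to $c$ from its own children arbitrarily many times (including zero) before exiting back to its parent, so there is no local obstruction to any prescribed distribution of the $k j_c$ outgoing steps of $c$ across the $k$ visits.
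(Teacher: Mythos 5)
Your proposal is correct and follows essentially the same route as the paper: decompose the tour at the root, reduce each child's subtree to a $k$-tour rooted at that child together with data recording how its $kj_c$ excursions are distributed over the $k$ visits, and recognize the resulting weight $\binom{kj_c+k-1}{k-1}$ as $\laplace_{t=1}\bigl(\tfrac{t^{k-1}}{(k-1)!}C_k(t,z)\bigr)$. The paper encodes the same bijection by marking which of the first $kj_c+k-1$ outgoing steps of the child return to its parent (a $(k-1)$-subset, equivalent to your composition by stars and bars) and routes the bookkeeping through an auxiliary exponential variable $t$ attached to a family of root-degree-one tours, but the content is identical.
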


The proof is analogous to Lemma 9 in~\cite{pana_ha} but we include an adjusted version for completeness.
\begin{proof}
    Let $\mF$ denote the family of $k$-tours on trees with $n+1$ vertices and with root degree $1$.
    Let $f_{k,1,m,n}$ denote the number of such $k$ tours on trees with root degree $1$ and $m+1$ is the degree of the only child of the root. Hence, these tours leave the root $k$ times and the only child of the root $k(m+1)$ times.
    We associate to the family $\mF$ the generating function
    \[
        F_k(t,x,z) =
        \sum_{m,n \geq 0}
        f_{k,1,m,n}
        \frac{t^{k(m+1)-1}}{(k(m+1)-1)!}
        \frac{x^k}{k!}
        z^{n},
    \]
    where $z$ counts all vertices except for the root, $x$ the departing steps from the root and $t$ the  departing steps from $c$ except for the last step.
    Consider a walk $W \in \mF$,
    let $r$ denote its root,
    $c$ the only child of $r$,
    and $W'$ the walk of root $c$
    obtained from $W$ by removing $r$.
    Assume the number of steps leaving $c$ is $(m+1)k$.
    We construct two disjoint sets $A$ and $B$
    whose union is $\{1, 2, \ldots, (m+1)k-1\}$.
    For all $i \in \{1,2,\ldots,(m+1)k-1\}$,
    if the $i$-th step leaving $c$
    goes to the root $r$, then $i \in A$.
    Otherwise, $i \in B$.
    We did not include the $(m+1)k$-th step leaving $c$, as we know it must always reach $r$. Observe that knowing $A$, $B$ and $W'$ is enough to reconstruct $W$. Further, $A$ has of course size $k-1$ since $W$ is a $k$-tour and there are $k$ steps leaving its root $r$.
    Because of the choice to make $t$ an exponential variable in the generating function $F_k(t,x,z)$, this bijection implies
    \[
        F_k(t,x,z) =      
        \frac{t^{k-1}x^k}{(k-1)!k!}C_k(t,z).
    \]
    In order to forget the variable $t$ in the generating function $F_k(t,x,z)$, we use the Laplace operator
    \[
        \sum_{m,n \geq 0}
        f_{k,1,m,n}
        \frac{x^{k}}{k!}
        z^{n}
        =
        \laplace_{t=1}(F_k(t,x,z)) = \frac{x^k}{k!}\laplace_{t=1}\left(\frac{t^{k-1}}{(k-1)!}C_k(t,z)\right).
    \]
    Finally, a $k$-tour is a root
    and a (possibly empty) set
    of elements from $\mF$, so
    \[
        C_k(x,z) = z \exp \left( \laplace_{t=1}(F_k(t,x,z)) \right).
    \]
\end{proof}
\begin{figure}
        \centering
        \includegraphics[width=0.3\textwidth, page=7]{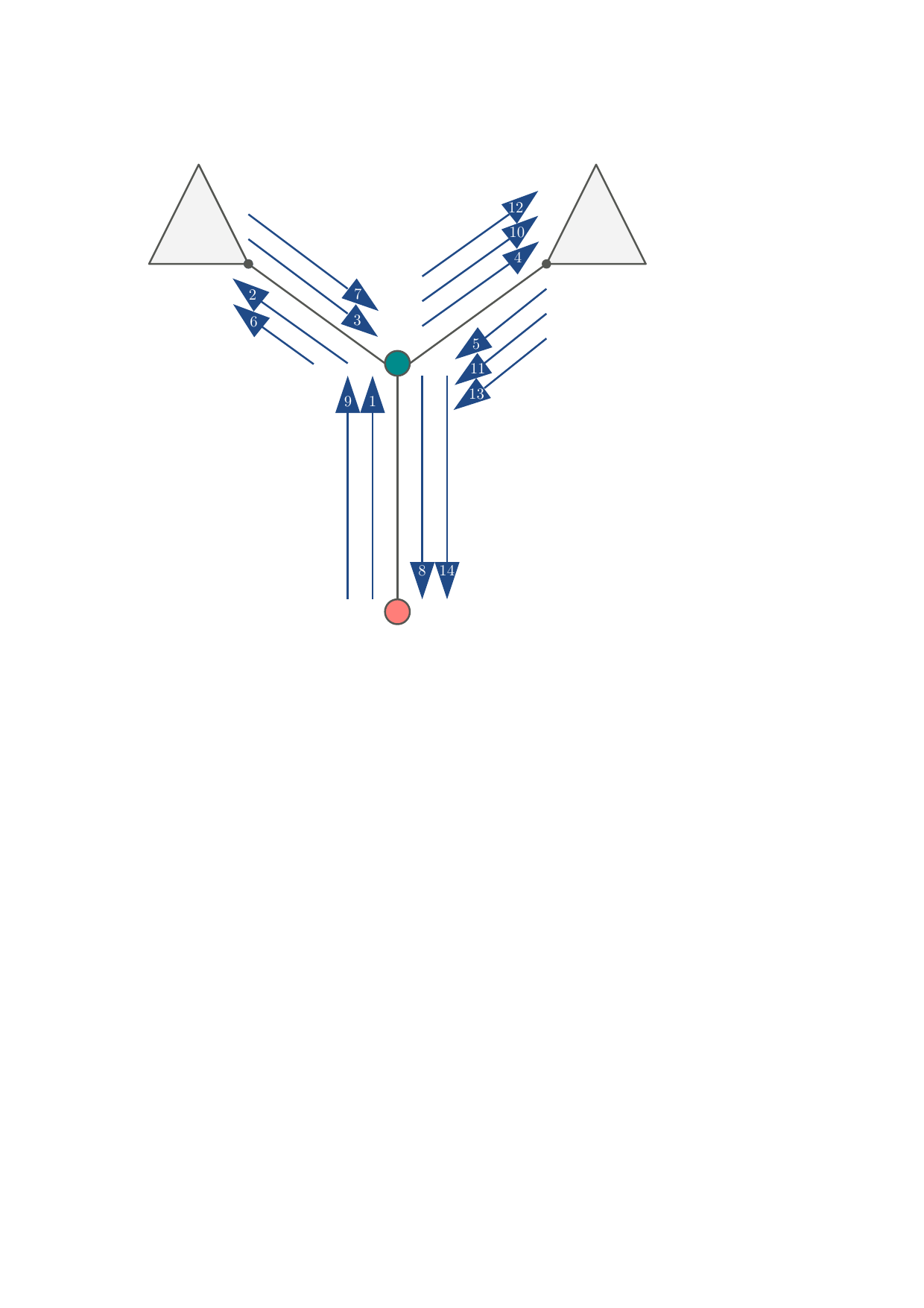}
        \caption{Decomposition of a 2-tour in $\mF$. The root is colored red and its only child $c$ green. Steps in set $A$ are purple and steps in set $B$ are dark blue.}
        \label{fig:decomp_tree}
\end{figure}
 Now given Lemma~\ref{lem:hyper}, we can prove Theorem~\ref{lem:closed_form} using Lagrange inversion.
 
\begin{proof}[Alternative proof of Theorem~\ref{lem:closed_form}]
    According to Lemma \ref{lem:hyper}, it holds that
    \[
        C_{k}(x,z) = z\exp\left(\frac{x^k}{k!} \laplace_{t=1}\left(\frac{t^{k-1}}{(k-1)!}C_{k}(t,z)\right)\right)
    \]
    and therefore it holds for $A(z) = \laplacex \left(\frac{x^{k-1}}{(k-1)!}C_{k}(x,z)\right)$
    \begin{align*}
        A(z) &= z\laplacex\left(\frac{x^{k-1}}{(k-1)!}\exp\left(\frac{x^k}{k!} A(z)\right)\right)\\
        &= z\sum_{i\geq 0} \frac{(ik+k-1)!}{(k-1)!(k!)^ii!}A(z)^i\\
        &= z \sum_{i\geq 0} \binom{(i+1)k-1}{k,k,\dots,k,k-1}\frac{A(z)^i}{i!}\\
        &= z \sum_{i\geq 0} \binom{(i+1)k}{k,k,\dots,k,k}\frac{A(z)^i}{(i+1)!}.
    \end{align*}
    If we apply the $\laplacex$-operator to $C_k(x,z)$ analogously, we obtain
    \[  
        C_k(z) = z\laplacex\left(\exp\left(\frac{x^k}{k!} A(z)\right)\right) = z\sum_{i\geq 0} \binom{ik}{k,k,k,\dots,k}\frac{A(z)^i}{i!}
    \]
    Consequently, we can apply the Lagrange inversion theorem for
    \[
        A(z) = z\phi(A(z)),\quad C_k(z) = zH(A(z))
    \]
    where $\phi(u) = \sum_{i\geq 0} \binom{(i+1)k}{k,k,\dots,k,k}\frac{u^i}{(i+1)!}$ and $H(u) = \sum_{i\geq 0} \binom{ik}{k,k,k,\dots,k}\frac{u^i}{i!}$. Thus, we obtain 
    \begin{align*}
        c^{(k)}_n &= [z^{n+1}]C_k(z) = \frac{1}{n}[u^{n-1}]H'(u)\phi(u)^{n}\\
        &= \frac{1}{n}\sum_{\ell\geq0}\sum_{\substack{n_0+n_1+\dots = n\\\ell+n_1+2n_2+...=n-1}}  \binom{n}{n_0,n_1,n_2,\dots}\frac{1}{\ell!}\binom{(\ell+1) k}{k,k,\dots,k}\prod_{i\geq 1}\left(\frac{1}{(i+1)!}\binom{(i+1)k}{k,k,\dots,k,k}\right)^{n_i}
    \end{align*}
    which is equivalent to the formula in Theorem~\ref{lem:closed_form} up to an index shift on $\ell$.
\end{proof}

\section{Asympotics of the hypergraph Catalan numbers}

Given the closed formula in Theorem~\ref{lem:closed_form}, we may prove the asymptotic formulas which we already stated in the introduction. 

\begin{thmn}
    Let $k\geq 1$ and $c_n^{(k)}$ be the number of $k$-tours on trees with $n+1$ vertices. Then
    \begin{align*}
        c_n^{(1)} = \frac{1}{\sqrt{\pi n^3}}4^n,\qquad
        c_n^{(2)} = 2\,\sqrt{\frac{e^{3}}{\pi n}} \,2^nn!
    \end{align*}
    and
    \begin{align*}
        c_n^{(k)} = 2\,\sqrt{\frac{k}{(2\pi n)^{k-1}}}\left(\frac{k^k}{k!}\right)^n(n!)^{k-1}, \quad k\geq3
    \end{align*}
    as $n\rightarrow \infty$.
\end{thmn}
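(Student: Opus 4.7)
The plan is to apply Stirling's formula directly to the closed form of Theorem~\ref{lem:closed_form} and to isolate the dominant contribution, which turns out to come from star-shaped trees. Writing $a_d := (dk)!/(d!\,(k!)^d)$, each summand of Theorem~\ref{lem:closed_form} has the form $\frac{\ell}{n}\binom{n}{n_0,n_1,\ldots}\,a_\ell\,\prod_{i\ge 1}a_{i+1}^{n_i}$, and a routine Stirling estimate yields $a_n \sim \sqrt{k/(2\pi n)^{k-1}}\,(k^k/k!)^n\,(n!)^{k-1}$.

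I would first observe that the star on $n+1$ vertices contributes to exactly two terms of the sum: the star rooted at its center ($\ell=n$, $n_0=n$) and the star rooted at a leaf ($\ell=1$, $n_{n-1}=1$, $n_0=n-1$). A direct computation shows that each of these terms equals exactly $a_n$, so the combined star contribution is $2a_n$, which by Stirling already matches the claimed asymptotic for $k\ge 3$. The case $k=1$ is then immediate since $a_d=1$ for all $d$ and the formula collapses to the count of plane trees, recovering the classical Catalan number.

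For $k\ge 3$, the remaining task is to show that the non-star sum is $o(a_n)$; I expect this to be the content of Proposition~\ref{prop:star}. The mechanism is the strong log-convexity of $d\mapsto \log a_d$ induced by the factor $(d!)^{k-1}$: adding a second internal vertex of degree $d'\ge 2$ to the star shape costs a factor of order $((d'-1)!)^{k-1}/n^{(k-1)(d'-1)}$ in $\prod_v a_{d_v}$, whereas the plane-tree count for the new degree sequence is only polynomial of degree at most $d'-1$ in $n$. Multiplying, each near-star family contributes $O(n^{-(k-2)(d'-1)})\cdot a_n$, and this bound is summable to $o(a_n)$ as soon as $k\ge 3$.

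The main obstacle is the case $k=2$, where the bound above degenerates to $O(1)\cdot a_n$ per near-star family, so infinitely many families contribute at the same order and their resummation produces the extra factor $e^{3/2}$. To extract this factor I would use the integral representation $a_d = \tfrac{1}{\sqrt{2\pi}}\int_{\RR}x^{2d}e^{-x^2/2}\,dx$, equivalently the Borel identity $\sum_{d\ge 0} a_d u^d/d! = (1-2u)^{-1/2}$, to rewrite the Lagrange formula $c_n^{(2)} = \tfrac{1}{n}[u^{n-1}]H'(u)\phi(u)^n$ as a Gaussian-type moment expression and evaluate it asymptotically via saddle-point, producing the factor $e^{3/2}$ on top of the star contribution.
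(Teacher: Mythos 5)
Your identification of the leading term is correct and matches the paper: the two rootings of the star contribute $2a_n$ with $a_n=\frac{1}{n!}\binom{kn}{k,\dots,k}\sim\sqrt{k/(2\pi n)^{k-1}}(k^k/k!)^n(n!)^{k-1}$, the near-star families for $k=2$ resum to the factor $\sum_{m}\frac{1}{m!}(3/2)^m=e^{3/2}$, and your per-family estimate $O(n^{-(k-2)(d'-1)})\,a_n$ for trees obtained from the star by splitting off one vertex of degree $d'$ agrees with Proposition~\ref{prop:star} and with the step-by-step comparison \eqref{eq:vergleich} in the paper's Part~1.

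However, there is a genuine gap in the error control, which is the bulk of the actual proof. Your log-convexity mechanism only compares a tree to the star by moving degree \emph{onto} an already near-maximal vertex; it says nothing about trees in which \emph{no} vertex has degree close to $n$ (say, two vertices of degree $\approx n/2$, or all internal degrees $O(n^{2/3})$). In that regime the gain per comparison step, which is of order $\left(\frac{i_p+1}{n-c}\right)^{k-1}$, is no longer small, and the number of admissible degree sequences is the number of partitions of $n$, i.e.\ $e^{\Theta(\sqrt{n})}$, so a bound of the form ``$O(n^{-(k-2)(d'-1)})$ per family'' is not summable without quantifying how rapidly the bound improves with the distance from the star. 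The paper handles this by splitting at maximal degree $n-n^{1/3}$: the near-star part uses a rerooting lemma plus the iterated comparison with an explicit partition-number factor $e^{\pi\sqrt{2(c-m+1)/3}}$, while the far part requires a separate generating-function (Chernoff-type) bound $[x^ny^{n-\ell}]e^{xf(y)}\le e^{x_0f(y_0)}x_0^{-n}y_0^{\ell-n}$ with the specific choice $x_0=n$, $y_0=k!e^{k-1}/(k^kn^{k-1})$; nothing in your sketch plays this role. Separately, your proposed saddle-point evaluation of $\frac{1}{n}[u^{n-1}]H'(u)\phi(u)^n$ for $k=2$ faces the obstacle that $\phi$ and $H$ are divergent series (radius of convergence $0$, since $a_{i+1}=(2i+1)!!$), so the Borel/Gaussian-integral route would need substantial justification; the paper avoids this entirely by summing the explicit star-like asymptotics.
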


Note that Gunnell conjectured asymptotics of the form
\[
    c_n^{(k)} = 
    \begin{cases}
        \frac{2\binom{2}{2}\binom{4}{2}\cdots\binom{k-1}{2}}{k^{(2k-3)/2}(\pi n)^{(k-1)/2}}\left(\frac{k^k}{k!}\right)^{n+1}(n!)^{k-1} &\mbox{if k is odd}\\
        \frac{\sqrt{2}\binom{3}{2}\binom{5}{2}\cdots\binom{k-1}{2}}{k^{(2k-3)/2}(\pi n)^{(k-1)/2}}\left(\frac{k^k}{k!}\right)^{n+1}(n!)^{k-1} &\mbox{if k is even and k}>2
    \end{cases}
\]
However, it is quite straightforward to check that these expressions coincide with our formula. For $k$ odd\footnote{In the case where $k$ is even, we assume that a typo must have happened in the conjecture. E.g. note that the product of binomial coefficients should probably end at $\binom{k}{2}$ or $\binom{k-2}{2}$.}, we can simplify 
\begin{align*}
    \frac{2\binom{2}{2}\binom{4}{2}\cdots\binom{k-1}{2}}{k^{(2k-3)/2}(\pi n)^{(k-1)/2}}\left(\frac{k^k}{k!}\right)^{n+1}(n!)^{k-1}
    &= \frac{2(k-1)!}{2^{(k-1)/2}k^{k-3/2}(\pi n)^{(k-1)/2}}\left(\frac{k^{k-1}}{(k-1)!}\right)\left(\frac{k^k}{k!}\right)^{n}(n!)^{k-1} \\
    &= \frac{2k^{1/2}}{2^{(k-1)/2}(\pi n)^{(k-1)/2}}\left(\frac{k^k}{k!}\right)^{n}(n!)^{k-1} \\
    &= 2\sqrt{\frac{k}{(2\pi n)^{k-1}}}\left(\frac{k^k}{k!}\right)^{n}(n!)^{k-1}.
\end{align*}
Indeed, in this section we will show that the asymptotics are governed by $k$-tours on trees that look like stars. 

\begin{defn}[Stars and star-like trees]
    Let $n\geq 1, m \in \{0,1,2\dots,n-3\}$ and $T$ be a (rooted) tree with $n+1$ vertices. If $T$ has a single vertex with degree $n-m$, $m$ vertices of degree $2$ and $n-m$ leaves, then we call $T$ \emph{star-like}. If further $m=0$, that is $T$ is a tree with one vertex of degree $n$ and $n$ leaves, then $T$ is called a \emph{star}.
 \end{defn}

The following proposition shows that the number of $k$-tours on stars (and star-like trees) is the main driver in the asymptotic growth of $c_n^{(k)}$.

\begin{figure}
        \centering
        \begin{subfigure}{0.2\textwidth}
            \includegraphics[width=\linewidth, page=1]{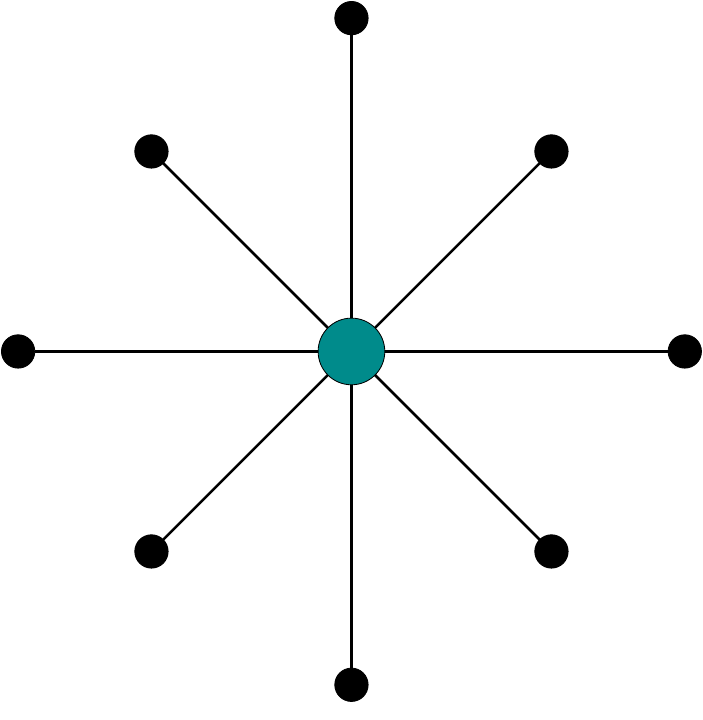}
        \end{subfigure}
        \hspace{10mm}
        \begin{subfigure}{0.2\textwidth}
            \includegraphics[width=\linewidth, page=2]{figures/star.pdf}
        \end{subfigure}
        \caption{Stars: the only rooted trees contributing to the asymptotic growth of $c_n^{(k)}$ for $k>2$. (The root is marked green)}
        \label{fig:startree}
    \end{figure}

\begin{proposition}\label{prop:star}
    Let $k\geq 2, m = o(\sqrt{n})$ and let $s_k(n,m)$ be the number of $k$-tours on a star-like trees with maximum vertex degree $n-m$. Then for $k\geq 2$, we have
    \[
        s_k(n,0) \sim  2\sqrt{\frac{k}{(2\pi n)^{k-1}}}\left(\frac{k^k}{k!}\right)^n(n!)^{k-1},
        \qquad 
        s_k(n,m) \sim  \frac{1}{m!}\left(\frac{(2k)!}{2k^kk!}\right)^m  \frac{s_k(n,0)}{n^{(k-2)m}}
    \]
    as $n$ tends to infinity. 
\end{proposition}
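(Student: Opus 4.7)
The plan is to derive an exact closed form for $s_{k}(n,m)$ by extracting from the sum in Theorem~\ref{lem:closed_form} the terms corresponding to star-like trees, and then apply Stirling's approximation. The key observation is that the local factor $\tfrac{1}{d!}\binom{kd}{k,\ldots,k}$ associated in the proof of Theorem~\ref{lem:closed_form} to a vertex of degree $d$ is the same whether that vertex is the root or not. Consequently, the product of local factors for \emph{any} star-like tree with parameters $(n,m)$ is
\[
    L_{n,m} \;=\; \frac{((n-m)k)!}{(n-m)!\,(k!)^{n-m}}\left(\frac{(2k)!}{2(k!)^{2}}\right)^{m},
\]
and $s_{k}(n,m) = L_{n,m}\cdot R_{n,m}$, where $R_{n,m}$ is the total number of rooted plane star-like trees with these parameters.

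To compute $R_{n,m}$, I would enumerate the three rooting types and apply the Gessel count $\tfrac{\ell}{n}\binom{n}{n_{0},n_{1},\ldots}$ used in the proof of Theorem~\ref{lem:closed_form}. Root at the center gives $(\ell,n_{0},n_{1})=(n-m,n-m,m)$ with count $\binom{n-1}{m}$; root at a degree-$2$ vertex gives $(\ell,n_{0},n_{1},n_{n-m-1})=(2,n-m,m-1,1)$ with count $2\binom{n-1}{m-1}$; root at a leaf gives $(\ell,n_{0},n_{1},n_{n-m-1})=(1,n-m-1,m,1)$ with count $\binom{n-1}{m}$. Summing and applying Pascal's rule yields
\[
    s_{k}(n,m) \;=\; 2\binom{n}{m}\,L_{n,m}.
\]

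From this closed form, both asymptotics are routine. Setting $m=0$ gives $s_{k}(n,0) = 2\,(nk)!/\bigl(n!\,(k!)^{n}\bigr)$; Stirling $N!\sim\sqrt{2\pi N}(N/e)^{N}$ together with the identification $(n/e)^{n(k-1)} \sim (n!)^{k-1}/(2\pi n)^{(k-1)/2}$ produces the first claim. For the general-$m$ asymptotic, I would compute
\[
    \frac{s_{k}(n,m)}{s_{k}(n,0)} \;=\; \frac{1}{m!}\left[\frac{n!}{(n-m)!}\right]^{2}\frac{((n-m)k)!}{(nk)!}\left(\frac{(2k)!}{2\,k!}\right)^{m},
\]
and apply the falling-factorial estimates $n!/(n-m)! \sim n^{m}$ and $(nk)!/((n-m)k)! \sim (nk)^{mk}$, both valid in the regime $m=o(\sqrt n)$ since the subleading correction is $\exp(-m^{2}/(2n)+o(1))\to 1$. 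Simplification then yields $\tfrac{1}{m!}\bigl(\tfrac{(2k)!}{2k^{k}k!}\bigr)^{m}n^{-(k-2)m}$, matching the second claim.

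The main obstacle is the combinatorial bookkeeping in the first step: identifying exactly which tuples in $T_{n}(\ell)$ correspond to star-like trees under each choice of rooting, and verifying that the three Gessel counts collapse to $2\binom{n}{m}$ via Pascal's rule. Once the closed form is secured, the asymptotic analysis reduces to standard Stirling calculations, and the condition $m=o(\sqrt n)$ enters only to control the $\exp(-m^{2}/(2n))$ corrections in the falling-factorial expansions.
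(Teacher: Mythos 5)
Your proposal is correct and follows essentially the same route as the paper: it isolates the three rooting positions of a star-like tree (center, leaf, degree-$2$ vertex), applies the Gessel count and the local factors from Theorem~\ref{lem:closed_form}, and finishes with Stirling in the regime $m=o(\sqrt n)$. The only difference is cosmetic — you consolidate the three rooted-tree counts into $2\binom{n}{m}$ via Pascal's rule before taking asymptotics, whereas the paper carries the two surviving terms separately; your closed form $s_k(n,m)=2\binom{n}{m}L_{n,m}$ agrees with the paper's expression.
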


This suggests, that for $k\geq 3$ there is a single tree, namely a tree with one vertex of degree $n$, that accounts for the asymptotics of $c_n^{(k)}$ (see Figure~\ref{fig:startree}). 

For $k=2$, we sum up the number of $k$-tours on star-like trees and obtain
    \[
        \sum_{m=0}^{o(\sqrt{n})} s_2(n,m) \sim s_2(n,0)\left(\sum_{c=0}^{o(\sqrt{n})} \frac{1}{m!}\left(\frac{3}{2}\right)^m\right) \sim 2\,\sqrt{\frac{e^3}{\pi n}}\, 2^{n}
        n!
    \]
   which corresponds to the asymptotics of $c_n^{(2)}$.\\

   In both cases, our aim is to show in the following that the rest of the $k$-tours are asymptotically negligible. However, let us first prove the proposition above. The proof is a straightforward computation based on the formula in Proposition~\ref{lem:closed_form}. 

\begin{proof}[Proof of Proposition~\ref{prop:star}]
    We have to distinguish three cases. 
    \begin{enumerate}
        \item If the root has degree $\ell = n-m$, then $n_1 = n-(n-m) = m$ and $n_0 = n-n_1 = n-m$. 
        \item If there is an internal vertex with degree $n-m$ (and therefore outdegree $n-m-1$) and the root has degree $\ell = 1$, then $n_1 = n-1-(n-m-1) = m$, while $n_0 = n-n_1-1 = n-m-1$.
        \item If there is an internal vertex with degree $n-m$ (and therefore outdegree $n-m-1$) and the root has degree $\ell = 2$, then $n_1 = n-2-(n-m-1) = m-1$ and $n_0 = n-n_1-1 = n-m$.
    \end{enumerate}
    Hence, by our computations in the proof of Lemma~\ref{lem:closed_form},
    \begin{align*}
        s_k(n,m) &= \frac{n-m}{n}\binom{n}{n-m,m}\left(\frac{1}{(n-m)!}\binom{k(n-m)}{k,\dots,k}\right)\left(\frac{1}{2!}\binom{2k}{k,k}\right)^m \\
        &\quad+ \frac{1}{n}\binom{n}{n-m-1,m,1}\left(\frac{1}{2!}\binom{2k}{k,k}\right)^m\left(\frac{1}{(n-m)!}\binom{k(n-m)}{k,\dots,k}\right)\\
        &\quad+ \frac{2}{n}\binom{n}{n-m,m-1,1}\left(\frac{1}{2!}\binom{2k}{k,k}\right)\left(\frac{1}{2!}\binom{2k}{k,k}\right)^{m-1}\left(\frac{1}{(n-m)!}\binom{k(n-m)}{k,\dots,k}\right)
    \end{align*}
    and therefore,
    \begin{align*}
        s_k(n,m)
        &=\frac{2}{m!}\left(\frac{1}{2}\binom{2k}{k,k}\right)^m\frac{(n-1)!}{(n-m-1)!}\left(\frac{1}{(n-m)!}\binom{k(n-m)}{k,\dots,k}\right)\\
        &\quad+\frac{2}{(m-1)!}\left(\frac{1}{2}\binom{2k}{k,k}\right)^m\frac{(n-1)!}{(n-m)!}\left(\frac{1}{(n-m)!}\binom{k(n-m)}{k,\dots,k}\right).
    \end{align*}
    
    For $m=0$, the above simplifies to
    \begin{align}\label{eq:a_k(n)}
        \frac{s_k(n,0)}{(n!)^{k-1}} 
        &= \frac{2}{(n!)^{k}}\binom{kn}{k,\dots,k}\left(1+O\left(\frac{1}{n}\right)\right)
        \sim 2\,\sqrt{\frac{k}{(2\pi n)^{k-1}}}\left(\frac{k^k}{k!}\right)^{n}
    \end{align}
    If $m\geq 1$ and we compare this number to $s_k(n,0)$, we obtain
    \begin{align*}
        \frac{s_k(n,m)}{s_k(n,0)} &=\frac{1}{m!}\left(\frac{k!}{2}\binom{2k}{k,k}\right)^m\frac{(n-1)!}{(n-m-1)!}\frac{n!}{(n-m)!}\frac{(k(n-m))!}{(kn)!}\left(1+\frac{m}{n-m}\right)
    \end{align*}
    Let us assume that $m=o(\sqrt{n})$, such that we can apply Stirling's approximation to all six factorials at the end of this expression. That is,
    \begin{align*}
        \frac{s_k(n,m)}{s_k(n,0)} &\sim \frac{1}{m!}\left(\frac{(2k)!}{2k!}\right)^m\sqrt{\frac{2\pi(n-1)\,2\pi n\,2\pi k(n-m)}{2\pi(n-m-1)\, 2\pi(n-m)\,2\pi kn}}\left(\frac{n-1}{e}\right)^{n-1}\left(\frac{n}{e}\right)^{n}\\
        &\qquad \cdot \left(\frac{k(n-m)}{e}\right)^{k(n-m)}\left(\frac{e}{n-m-1}\right)^{n-m-1}\left(\frac{e}{n-m}\right)^{n-m}\left(\frac{e}{kn}\right)^{kn}\\
        &\sim \frac{1}{m!}\left(\frac{(2k)!}{2k^kk!}\right)^m\sqrt{\frac{n-1}{n-m-1}}\frac{n-m}{n-1}\\
        &\qquad \cdot \left(\frac{n-1}{n}\right)^n\left(\frac{n-m}{n-m-1}\right)^{n-m-1}\left(\frac{n-m}{n}\right)^{(k-2)(n-m)}\left(\frac{e}{n}\right)^{(k-2)m}
    \end{align*}
    Now we can use the fact that $(1+\frac{m}{n})^n \sim e^{m}$ for $m=o(\sqrt{n})$ and obtain the desired asymptotic expression
    \begin{align*}
        \frac{s_k(n,m)}{s_k(n,0)} \sim \frac{1}{m!}\left(\frac{(2k)!}{2k^kk!}\right)^m n^{-(k-2)m}
    \end{align*}
    as stated in the Lemma.
\end{proof}

\begin{figure}
        \centering
        \begin{subfigure}{0.2\textwidth}
            \includegraphics[width=\linewidth, page=1]{figures/star.pdf}
            \vspace{0mm}
            \caption{Star(-like) tree}
        \end{subfigure}
        \hspace{6mm}
        \begin{subfigure}{0.35\textwidth}
            \includegraphics[width=\linewidth]{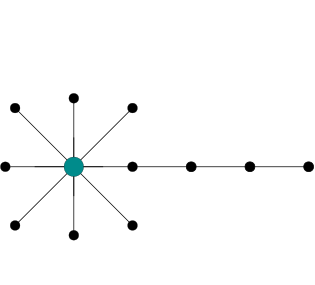}
            \caption{Star-like tree}
        \end{subfigure}
        \hspace{6mm}
        \begin{subfigure}{0.25\textwidth}
        \includegraphics[width=\linewidth, page=2]{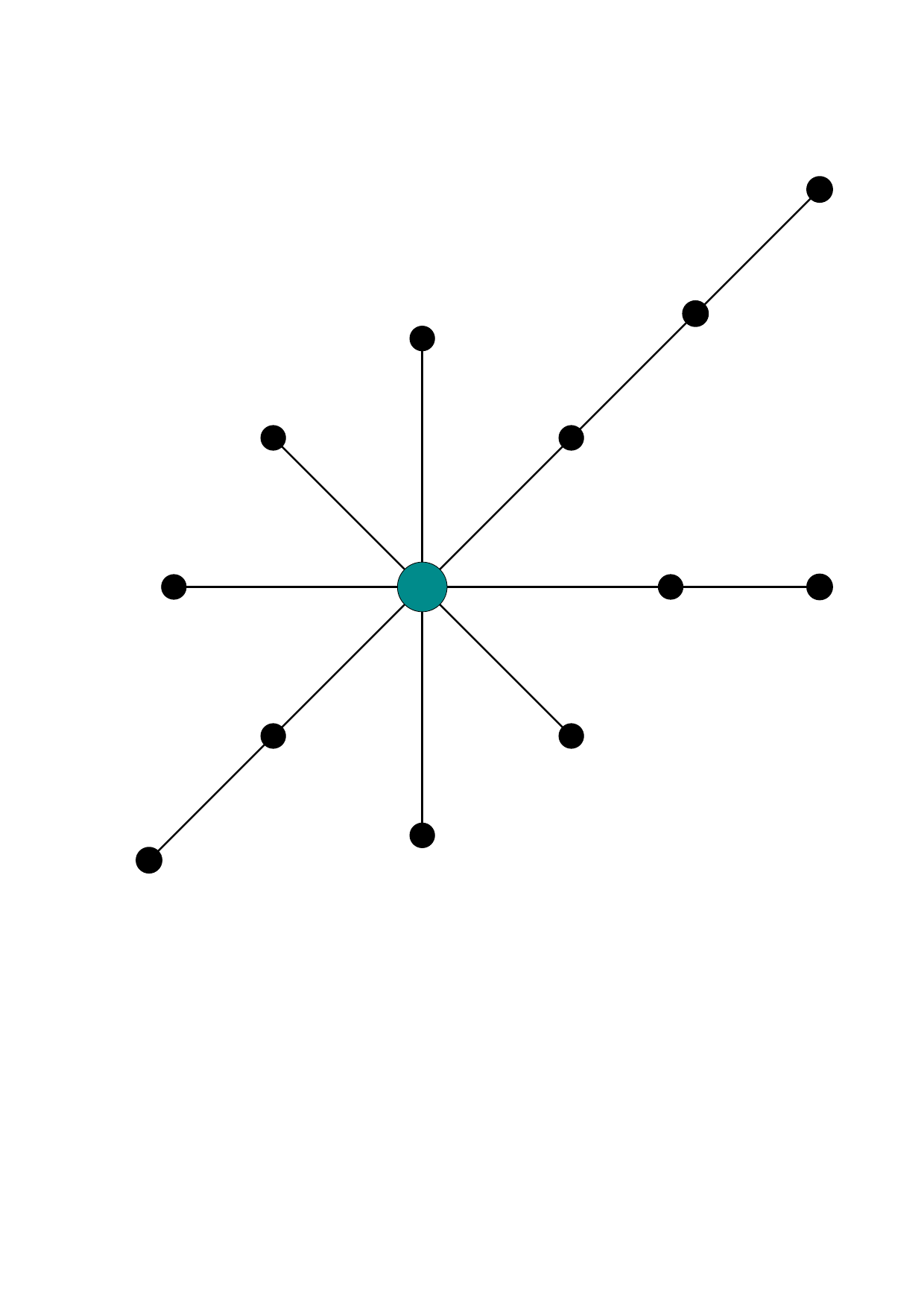}
        \vspace{0mm}
        \caption{Star-like tree}
        \end{subfigure}
        \caption{Star-like trees: rooted trees contributing to the asymptotic growth of $c_n^{(2)}$. (The root is marked green)}
        \label{fig:starlikeT}
    \end{figure}
We observe that in the proof above the number of $k$-tours on a trees with one vertex of degree $n-m$ and $m$ internal vertices with degree $2$ is of course the same for each tree. That is, the number of $k$-tours is consistently 
\[
    \left(\frac{1}{2}\binom{2k}{k,k}\right)^m\frac{1}{(n-m)!}\binom{k(n-m)}{k,\dots,k}.
\]
However, when the root is of degree $2$, there are less rooted trees such that only the first two cases which we considered contribute asymptotically.
We generalize this observation such that we may only estimate the number of $k$-tours on trees with root degree $1$ in our final computations.

\begin{lemma}[Rerooting lemma]\label{lem:reroot}
    Let $n\in \mathbb{N}$, $m\in \{1,2,  \dots, n-1\}$ and $n_1,n_2,\dots n_{n-1}$ such that $\sum_{i=1}^{n-1} n_i = m-1$ and $\sum_{i=1}^{n-1} in_i = n-1$. Then, 
    \[
        \sum_{\ell =2}^{n-(m-1)} \frac{\ell}{n}\binom{n}{n-m+1,n_1,\dots, n_{\ell-1}-1,\dots} \leq \frac{m}{n}\binom{n}{n-m,n_1,\dots, n_{\ell-1},\dots}.
    \]
\end{lemma}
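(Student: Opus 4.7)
My plan is to prove the inequality by first applying a multinomial identity to express both sides in a common form, and then reducing to an elementary per-term arithmetic bound controlled by the hypothesis constraints.

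First, I would apply the elementary multinomial identity
\[
    (n-m+1)\binom{n}{n-m+1,\,n_1,\dots,n_{\ell-1},\dots}=(n_{\ell-1}+1)\binom{n}{n-m,\,n_1,\dots,n_{\ell-1}+1,\dots},
\]
which follows by expanding the factorials on both sides and observing that each equals $n!/\bigl((n-m)!\prod_i n_i!\bigr)$ after simplification. This identity re-expresses each summand on the left-hand side as a scalar multiple of a multinomial in the same shape as the one on the right-hand side, with the slot $n_{\ell-1}$ incremented by one.

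Second, substituting this identity into the left-hand side yields
\[
    \sum_{\ell=2}^{n-m+1}\frac{\ell}{n}\binom{n}{n-m+1,\,n_1,\dots}=\frac{1}{n(n-m+1)}\sum_{\ell=2}^{n-m+1}\ell(n_{\ell-1}+1)\binom{n}{n-m,\,n_1,\dots,n_{\ell-1}+1,\dots}.
\]
Comparing the individual summands against the right-hand side of the lemma, the inequality reduces to the elementary scalar bound
\[
    \ell\,(n_{\ell-1}+1)\leq m(n-m+1)
\]
for every $\ell$ in the summation range. This follows immediately from $\ell\leq n-m+1$ (the upper summation limit) combined with $n_{\ell-1}+1\leq\sum_{i\geq 1}n_i+1=m$, which is a consequence of the hypothesis $\sum_i n_i=m-1$.

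The main obstacle will be correctly matching the $\ell$-dependent multinomials on the rewritten left-hand side with the single multinomial on the right-hand side of the lemma. The per-$\ell$ factorial identity is clean, but the summation over $\ell$ requires a careful interpretation of how the ``$n_{\ell-1}$'' slot varies with $\ell$; the hypothesis $\sum i\,n_i=n-1$ plays the crucial role here, as it forces the support of $(n_i)$ to concentrate at large indices and thereby controls which terms in the sum are non-negligible. Combining this concentration with the per-term bound $\ell(n_{\ell-1}+1)\leq m(n-m+1)$ yields the claimed inequality.
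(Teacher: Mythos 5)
Your factorial identity is correct (and is essentially the same manipulation the paper uses), but the reduction that follows does not work, and the term-by-term strategy cannot be repaired. After your rewriting, the left-hand side is a sum of roughly $n-m$ terms indexed by $\ell$; bounding each term by a multiple of a single fixed multinomial and then summing necessarily costs a factor of order $n-m$, which you never recover — this is exactly the ``main obstacle'' you flag, and flagging it is not the same as resolving it. Worse, since $(n_{\ell-1}+1)\binom{n}{n-m,\dots,n_{\ell-1}+1,\dots}$ equals the fixed quantity $M=n!/\bigl((n-m)!\prod_i n_i!\bigr)$ for every $\ell$, your rewritten left-hand side is exactly $\tfrac{M}{n(n-m+1)}\sum_{\ell=2}^{n-m+1}\ell$, which is of order $\tfrac{(n-m)M}{2n}$ and genuinely exceeds $\tfrac{m}{n}M$ when $m$ is small compared to $n$: for $n=6$, $m=2$, $n_5=1$ the two sides are $14$ and $10$. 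So under your literal reading of the summands the inequality is false, and no concentration heuristic about the support of $(n_i)$ can rescue it ($\sum_i i\,n_i=n-1$ does not force concentration at large indices; take $n_1=m-2$ and $n_{n-m+1}=1$).

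What you are missing is the intended (admittedly sloppily typeset) meaning of the highlighted slot: in the $\ell$-th summand the count of vertices of outdegree $\ell-1$ is \emph{decremented}, because the degree-$\ell$ root absorbs one such vertex. Hence the summand vanishes unless $n_{\ell-1}\geq 1$, and the identity reads
\[
\frac{\ell}{n}\binom{n}{n-m+1,\dots}=\frac{\ell\, n_{\ell-1}}{n-m+1}\cdot\frac{1}{n}\binom{n}{n-m,\dots}
\]
with one and the same multinomial on the right for all $\ell$. The inequality is then obtained not by a per-term bound but by evaluating the whole sum \emph{exactly} from the two linear constraints: $\sum_\ell \ell\, n_{\ell-1}=\sum_i (i+1)n_i=(n-1)+\sum_i n_i$, which (with the normalization $\sum_i n_i=m$ actually used in the paper's computation; the statement's $m-1$ is an off-by-one) gives $\sum_\ell \tfrac{\ell n_{\ell-1}}{n-m+1}=1+\tfrac{2(m-1)}{n-m+1}\leq m$ for $m\leq n-1$. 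This exact evaluation of the weighted sum over $\ell$, invisible at the level of individual summands, is the entire content of the lemma; your per-term bound $\ell(n_{\ell-1}+1)\leq m(n-m+1)$, while true, cannot substitute for it.
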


\begin{proof} Intujtively, the statement is clear, because each tree with $n+1-m$ leaves, which is rooted at a leaf can obviously be rerooted at $m$ internal vertices. Let us compute the bound nevertheless.
    \begin{align*}
        \frac{\ell}{n}\binom{n}{n-m+1,n_1,\dots, n_{\ell-1}-1,\dots} 
        &= \frac{\ell n_{\ell-1}}{n-m+1}\frac{1}{n}\binom{n}{n-m,n_1,\dots, n_{\ell-1},\dots},
    \end{align*}
    Summing over $\ell$ then results in the desired identity since
    \begin{align*}
        \sum_{\ell=2}^{n-m+1} \frac{\ell n_{\ell-1}}{n-m+1} &= \frac{1}{n-m+1}\sum_{\ell=2}^{n-m+1} \left((\ell-1) n_{\ell-1}+n_{\ell-1}\right) \\
        &= \frac{n-1}{n-m+1}+\frac{m}{n-m+1} \\
        &= 1 + \frac{2(m-1)}{n-m+1}
    \end{align*}
    and it is straightforward to check that
    \[
        \frac{2(m-1)}{n-(m-1)} \leq (m-1)
    \]
    under the assumption that $1\leq m\leq n-1$.
\end{proof}

\begin{corollary}\label{cor:reroot}
    As a direct consequence, we obtain
    \begin{align*}
        &\sum_{\ell=1}^{n} 
        \sum_{\substack{(n_0,n_1,\dots)\in T_n(\ell)}}
        \frac{\ell}{n}\binom{n}{n_0,n_1,n_2,\dots}
        \frac{1}{\ell!}\binom{\ell k}{k,k,\dots,k} \prod_{i\geq 1}\left(\frac{1}{(i+1)!}\binom{(i+1)k}{k,k,\dots,k,k}\right)^{n_i}\\
        &\leq \sum_{\substack{(n_0,n_1,\dots)\in T_n(1)}}\frac{n-n_0+2}{n}\binom{n}{n_0,n_1,n_2,\dots} \prod_{i\geq 1}\left(\frac{1}{(i+1)!}\binom{(i+1)k}{k,k,\dots,k,k}\right)^{n_i}
    \end{align*}
\end{corollary}

\begin{proof}
    For $(n_0,n_1,\dots)\in T_n(\ell)$, we denote the number of vertices in the tree of degree $\ell$ by $\tilde{n}_\ell = n_{\ell-1}+1$ and the number of vertices of degree $i\ne \ell$ by $\tilde{n}_i = n_{i-1}$. Now, note that 
    \begin{align*}
        &\sum_{\ell=1}^{n} 
        \sum_{\substack{(n_0,n_1,\dots)\in T_n(\ell)}}
        \frac{\ell}{n}\binom{n}{n_0,n_1,n_2,\dots}
        \frac{1}{\ell!}\binom{\ell k}{k,k,\dots,k} \prod_{i\geq 1}\left(\frac{1}{(i+1)!}\binom{(i+1)k}{k,k,\dots,k,k}\right)^{n_i}\\
        &= \sum_{\ell=1}^{n} 
        \sum_{\substack{(n_0,n_1,\dots)\in T_n(\ell)}}
        \frac{\ell}{n}\binom{n}{n_0,n_1,n_2,\dots}
        \prod_{\substack{i\geq 1}}\left(\frac{1}{i!}\binom{ik}{k,k,\dots,k,k}\right)^{\tilde{n}_i}
    \end{align*}
    If $\ell>1$, rerooting the tree to the left-most leaf gives a tree with $(n_0-1,n_1,\dots,n_{\ell-1}+1,\dots)\in T_n(1)$. The $\tilde{n}_i, i\geq 1$ are of course not affected by this rerooting. Now, the Rerooting Lemma~\ref{lem:reroot} tells us that the sum over all rooted trees can be estimated by
    \[
        \sum_{\substack{(\tilde{n}_1,\tilde{n}_2,\dots)\in T_n(1)}}
        \frac{m+1}{n}\binom{n}{n_0,n_1,n_2,\dots}
        \prod_{\substack{i\geq 1}}\left(\frac{1}{i!}\binom{ik}{k,k,\dots,k,k}\right)^{\tilde{n}_i}
    \]
    where $m-1 = n-n_0$.
\end{proof}

Finally, we are ready to tackle the asymptotic analysis of the formula in Theorem~\ref{lem:closed_form}.

\begin{proof}[Proof of Theorem 1]
    By Lemma~\ref{lem:closed_form}, we want to analyze the sum
    \begin{equation}\label{eq:bk,k>2}
        c_n^{(k)} = \sum_{\ell=1}^{n} 
        \sum_{\substack{(n_0,n_1,\dots)\in T_n(\ell)}}
        \frac{\ell}{n}\binom{n}{n_0,n_1,n_2,\dots}
        \frac{1}{\ell!}\binom{k \ell}{k,k,\dots,k} \prod_{i\geq 1}\left(\frac{1}{(i+1)!}\binom{(i+1)k}{k,k,\dots,k,k}\right)^{n_i}.
    \end{equation}
    To that end, we separate the sum into two parts depending on the highest degree in the tree. Let $M(\ell, n_1,\dots) := \max\left\{\{\ell\} \cup \{i+1 \mid n_1\geq 1\}\right\}$ denote the highest degree in the tree with the respective degree sequence.
    \begin{align*}
        c_n^{(k)} 
        &= \sum_{\ell=1}^{n} 
        \sum_{\substack{(n_0,n_1,\dots)\in T_n(\ell)\\
        M(\ell,n_1,\dots) < n-\lceil n^{1/3}\rceil}}
        \frac{\ell}{n}\binom{n}{n_0,n_1,n_2,\dots}
        \frac{1}{\ell!}\binom{k \ell}{k,k,\dots,k} \prod_{i\geq 1}\left(\frac{1}{(i+1)!}\binom{(i+1)k}{k,k,\dots,k,k}\right)^{n_i} \\
        &+ \sum_{\ell=1}^{n} 
        \sum_{\substack{(n_0,n_1,\dots)\in T_n(\ell)\\
        \\
        M(\ell,n_1,\dots) \geq n-\lceil n^{1/3}\rceil}}
        \frac{\ell}{n}\binom{n}{n_0,n_1,n_2,\dots}
        \frac{1}{\ell!}\binom{k \ell}{k,k,\dots,k} \prod_{i\geq 1}\left(\frac{1}{(i+1)!}\binom{(i+1)k}{k,k,\dots,k,k}\right)^{n_i}.
    \end{align*}
    The first part contains all summands where the highest degree in the tree is higher than $n-n^{1/3}$ and the second part accounts for all cases where the highest degree is at most $n-n^{1/3}$.\\
    
    \textbf{Part 1: The highest degree is at least ($n-n^{1/3}$).} First of all, we already analyzed the summands that correspond to star-like trees in Proposition~\ref{prop:star}.

    So, for fixed $c>1$, we the highest degree in the tree is $n-c$, we are left with analyzing the sum over all $(n_0,n_1,\dots)\in T_n(\ell)$ with $M(\ell,n_1,\dots) = n-c$ and $\max\{\ell-2,\sum_{i=2}^{n-c-2}n_i\} >0$. Further, we use the Rerooting Lemma~\ref{lem:reroot} and in particular Corollary~\ref{cor:reroot} to simplify the sum to
    \begin{align*} \label{eq:big_deg}
        &\sum_{\ell=1}^{n} 
        \sum_{\substack{(n_0,n_1,\dots)\in T_n(\ell)\\
        \\
        M(\ell,n_1,\dots) \geq n-\lceil n^{1/3}\rceil\\ \max\{\ell-2,\sum_{i=2}^{n-c-2}n_i\} >0}}
        \frac{\ell}{n}\binom{n}{n_0,n_1,n_2,\dots}
        \frac{1}{\ell!}\binom{k \ell}{k,k,\dots,k} \prod_{i\geq 1}\left(\frac{1}{(i+1)!}\binom{(i+1)k}{k,k,\dots,k,k}\right)^{n_i} \nonumber \\
        &\leq 
        \sum_{\substack{(n_0,n_1,\dots)\in T_n(1)\\
        \\
        M(1,n_1,\dots) \geq n-\lceil n^{1/3}\rceil\\ 
        \sum_{i=2}^{n-c-2}n_i >0}}
        \frac{n-n_0+2}{n}\binom{n}{n_0,n_1,n_2,\dots}
        \prod_{i\geq 1}\left(\frac{1}{(i+1)!}\binom{(i+1)k}{k,k,\dots,k,k}\right)^{n_i}. 
    \end{align*}
    
    In the following, we estimate the summand corresponding to $(n_0,n_1,\dots,n_{n-c-1},0\dots)$ in the above sum with respect to $s(n,n_1-1+\sum_{i\geq 2} n_i)$. 
    So, let $m = \sum_{i\geq 1} n_i = n-n_0+1$ be the number of internal vertices. Further, let $n-c-1 > n-\lceil n^{1/3} \rceil$ be the largest outdegree and $p$ be the smallest outdegree larger than $1$ in the degree sequence $(n_0,n_1,\dots)$. We estimate the corresponding summand by comparing it to the summand with degree sequence $(n'_0,n'_1,\dots)$, where $n'_p = n_p-1$, $n'_{p-1} = n_{p-1}+1$, $n'_{n-c-1}=0$ and $n'_{n-c}=1$. (One can think about it as moving one of the subtrees at a vertex of outdegree $p$ to the vertex with the highest outdegree.)
    We therefore obtain
    \begin{align*}
        &\frac{\frac{n-n_0+2}{n}\binom{n}{n_0,n_1,n_2,\dots}
        \prod_{i\geq 1}\left(\frac{1}{(i+1)!}\binom{(i+1)k}{k,k,\dots,k,k}\right)^{n_i}}{\frac{n-n'_0+2}{n}\binom{n}{n'_0,n'_1,n'_2,\dots}
        \prod_{i\geq 1}\left(\frac{1}{(i+1)!}\binom{(i+1)k}{k,k,\dots,k,k}\right)^{n'_i}} = \frac{
        \left(\frac{(k(n-c))!}{(n-c)!}\right)
        \frac{1}{n_{p}}\left(\frac{(k(p+1))!}{(p+1)!}\right)
        }
        {\left(\frac{(k(n-c+1))!}{(n-c+1)!}\right)\frac{1}{n_{p-1}+1}\left(\frac{(kp)!}{p!}\right)}
        \\
        &= \frac{n_{p-1}+1}{n_{p}}\frac{k(p+1)(kp+k-1)\cdots(kp+1)}{k(n-c+1)(kn-kc+k-1)\cdots(kn-kc+1)}\frac{n-c+1}{p+1}
        \\
        &\leq \frac{n_{p-1}+1}{n_{p}}\left(\frac{p+1}{n-c}\right)^{k-1}.
    \end{align*}
    Note that $n_{p-1}=0$ if $p >2$ since it is the smallest outdegree which is larger than $1$. So,
    \begin{equation}\label{eq:vergleich}
        \frac{\frac{n-n_0+2}{n}\binom{n}{n_0,n_1,n_2,\dots}
        \prod_{i\geq 1}\left(\frac{1}{(i+1)!}\binom{(i+1)k}{k,k,\dots,k,k}\right)^{n_i}}{\frac{n-n'_0+2}{n}\binom{n}{n'_0,n'_1,n'_2,\dots}
        \prod_{i\geq 1}\left(\frac{1}{(i+1)!}\binom{(i+1)k}{k,k,\dots,k,k}\right)^{n'_i}} 
        \leq 
        \begin{cases}
            \left(\frac{c+1}{n-c}\right)^{k-1} &\mbox{if } p> 2\\
            (n_1+1)\left(\frac{3}{n-c}\right)^{k-1} &\mbox{if } p=2
        \end{cases}
    \end{equation}
    Thus, if we apply this estimate recursively until we compare the tree to a starlike tree degree sequence, where the maximum degree is $n-c':=n-c+\sum_{i=2}^{n-c-2}(i-1)n_i$, we obtain
    \begin{align*}
        &\frac{\frac{n-n_0+2}{n}\binom{n}{n_0,n_1,n_2,\dots}
        \prod_{i\geq 1}\left(\frac{1}{(i+1)!}\binom{(i+1)k}{k,k,\dots,k,k}\right)^{n_i}}{\frac{n-n_0+2}{n}\binom{n}{n_0,n_1+m,1}
        \left(\frac{1}{2}\binom{2k}{k,k,\dots,k,k}\right)^{n_1+m}\frac{1}{(n-c'+1)}\binom{(n-c'+1)k}{k,k,\dots,k,k}}
        \\
        &\leq \frac{(m-1)!}{n_1!}\left(\left(\frac{3}{n-c}\right)^{m-1-n_1}\left(\frac{c+1}{n-c}\right)^{\sum_{i=2}^{n-c-2}(i-2)n_i}\right)^{k-1}
    \end{align*}

    We know from the proof of Proposition~\ref{prop:star} that
    \begin{align*}
        &\frac{1}{n}\binom{n}{n_0,n_1+m,1}
        \left(\frac{1}{2}\binom{2k}{k,\dots,k}\right)^{m-1}\frac{1}{(n-c'+1)}\binom{(n-c'+1)k}{k,\dots,k} \leq \frac{1}{2}s_k(n,m-1)
    \end{align*}
    Since $\sum_{i=2}^{n-c-2}(i-2)n_i =c+n_1-2(m-1)$, we proceed with
    \begin{align*}
        &\frac{1}{n}\binom{n}{n_0,n_1,n_2,\dots}
        \prod_{i\geq 1}\left(\frac{1}{(i+1)!}\binom{(i+1)k}{k,k,\dots,k,k}\right)^{n_i}
        \\
        &\leq \frac{(m-1)!}{n_1!}\left(\left(\frac{3}{n-c}\right)^{m-1-n_1}\left(\frac{c+1}{n-c}\right)^{c+n_1-2(m-1)}\right)^{k-1} 
        \frac{1}{2}\frac{s_k(n,m-1)}{2}
        \\
        &\leq \frac{(m-1)!}{n_1!}\left(\left(\frac{3}{c+1}\right)^{m-1-n_1}\left(\frac{c+1}{n-c}\right)^{c-(m-1)}\right)^{k-1} 
        \frac{1}{2}\frac{s_k(n,m-1)}{2}
        \\
        &\leq 
        \left(\frac{m-1}{c+1}\right)^{m-1-n_1}3^{m-1-n_1}\left(\frac{c+1}{n-c}\right)^{(k-1)(c-m+1)}\frac{s_k(n,m-1)}{2}\\
        &\leq 3^{m-1}\left(\frac{c+1}{n-c}\right)^{(k-1)(c-m+1)}\frac{s_k(n,m-1)}{2},
    \end{align*}
    where we used that $m-1 < c+1$ due to the fact that $m-1 = \sum_{i\geq 1}n_i-1 < \sum_{i= 1}^{n-c-2} in_i=c$ and $c\geq 2$ since there is at least one vertex with outdegree at least $2$, smaller than $n-c-1$. Further, by Proposition~\ref{prop:star}, we have
    \begin{align*}
        &
        3^{m-1}\left(\frac{c+1}{n-c}\right)^{(k-1)(c-m+1)}\frac{s_k(n,m-1)}{2}\\
        &\leq \left(\frac{c+1}{n-c}\right)^{(k-1)(c-m+1)}\frac{1}{(m-1)!}\left(\frac{3(2k)!}{2k^kk!}\right)^{m-1}\frac{s_k(n,0)}{2n^{(k-2)(m-1)}}.
    \end{align*}
    
    The number of summands that produce this estimate is the number of partitions of $c$ into $m-1$ parts. Therefore, we multiply the bound by $e^{\pi\sqrt{\frac{2(c-m+1)}{3}}}$ which bounds the number of such partitions \cite{par_num} and obtain,
    \begin{align*}
        &\sum_{\substack{(n_0,n_1,\dots)\in T_n(1)\\
        \\
        M(1,n_1,\dots) \geq n-\lceil n^{1/3}\rceil\\ 
        \max\{\ell-2,\sum_{i=2}^{n-c-2}n_i\} >0}}
        \frac{n-n_0+2}{n}\binom{n}{n_0,n_1,n_2,\dots}
        \prod_{i\geq 1}\left(\frac{1}{(i+1)!}\binom{(i+1)k}{k,k,\dots,k,k}\right)^{n_i}
        \\
        &\leq \sum_{c=2}^{\lceil n^{1/3}\rceil }\sum_{m=2}^{c}  
        e^{\pi\sqrt{\frac{2(c-m+1)}{3}}}\frac{m+1}{2(m-1)!}\left(\frac{3^{k-1}(2k)!}{2k^kk!}\right)^{m-1}\left(\frac{c+1}{n-c}\right)^{(k-1)(c-m+1)}\frac{s_k(n,0)}{n^{(k-2)(m-1)}}\\
        &\leq \sum_{c=2}^{\lceil n^{1/3}\rceil }\sum_{m=2}^{c}  
        \frac{6^{(k-1)(m-1)}}{(m-2)!}\left(\frac{e^{\pi\sqrt{\frac{2}{3}}}(c+1)}{n-c}\right)^{(k-1)(c-m+1)}\frac{s_k(n,0)}{n^{(k-2)(m-1)}}\\
        &\leq \sum_{c=2}^{\lceil n^{1/3}\rceil }\left(\frac{14n^{1/3}}{n-n^{1/3}}\right)^{k-1}\frac{s_k(n,0)}{n^{k-2}}\sum_{m=0}^{\infty}  
        \frac{1}{m!}6^{(k-1)(m+1)}\\
        &\leq \sum_{c=2}^{\lceil n^{1/3}\rceil }c(k)\left(\frac{14n^{1/3}}{n-n^{1/3}}\right)^{k-1}\frac{s_k(n,0)}{n^{k-2}},
    \end{align*}
    where the final constant depending on $k$ is $c(k) = 6^{k-1}\exp(6^{k-1})$. In total, we thus obtain
    \begin{align*}
        \sum_{\ell=1}^{n} 
        \sum_{\substack{(n_0,n_1,\dots)\in T_n(\ell)\\
        \\
        M(\ell,n_1,\dots) \geq n-\lceil n^{1/3}\rceil}}
        &\frac{\ell}{n}\binom{n}{n_0,n_1,n_2,\dots}
        \frac{1}{\ell!}\binom{k \ell}{k,k,\dots,k} \prod_{i\geq 1}\left(\frac{1}{(i+1)!}\binom{(i+1)k}{k,k,\dots,k,k}\right)^{n_i} \\
        &\leq \sum_{m=0}^{n^{1/3}}s_k(n,m)+c(k)\left(\frac{14n^{2/3}}{n-n^{1/3}}\right)^{k-1}\frac{s_k(n,0)}{n^{k-2}}\\
        &= \sum_{m=0}^{n^{1/3}}s_k(n,m) +o\left(s_k(n,0)\right).
    \end{align*}
    
    \textbf{Part 2: The highest degree is smaller than ($n-n^{1/3}$).}
    We are left with the case where $c>\lceil n^{1/3} \rceil$, that is, the largest degree in the tree is at most $\alpha := n-\lceil n^{1/3}\rceil-1$. 
    That means we only need to consider the subsum
    \begin{align*}
        S_n(\alpha) := \sum_{\ell =1}^\alpha &\sum_{\substack{\sum n_i = n\\ \sum in_i = n-\ell}} \frac{\ell}{n}\binom{n}{n_0,n_1,n_2,\dots}
        \frac{1}{\ell!}\binom{k \ell}{k,\dots,k} \prod_{i= 1}^{\alpha-1}\left(\frac{1}{(i+1)!}\binom{k(i+1)}{k,\dots,k}\right)^{n_i}
    \end{align*}
    which conveniently can be rewritten as a sum over coefficients of a bivariate generating function,
    \begin{align*}
        S_n(\alpha) &= \sum_{\ell =1}^\alpha \frac{\ell}{n}\frac{n!}{\ell!}\binom{k\ell}{k,\dots,k} [x^ny^{n-\ell}]\prod_{i=1}^{\alpha}\exp\left(\frac{1}{i!}\binom{ki}{k,\dots,k}xy^{i-1}\right)\\
        &= \sum_{\ell =1}^\alpha \frac{\ell}{n}\frac{n!}{\ell!}\binom{k\ell}{k,\dots,k} [x^ny^{n-\ell}] F(x,y)
    \end{align*}
    where
    \[
        F(x,y) := \prod_{i=1}^{\alpha}\exp\left(\frac{1}{i!}\binom{ki}{k,\dots,k}xy^{i-1}\right) = \exp\left(\sum_{i=1}^{\alpha}\frac{1}{i!}\binom{ki}{k,\dots,k}xy^{i-1}\right) = e^{xf(y)},
    \]
    and
    \[
        f(y) = \sum_{i=1}^{\alpha}\frac{1}{i!}\binom{ki}{k,\dots,k} y^{i-1}.
    \]
    Clearly, we can roughly estimate the coefficient of $x^ny^{n-\ell}$ in $F(x,y)$ by the full function
    \[
        [x^ny^{n-\ell}]F(x,y) \leq \frac{e^{xf_n(y)}}{x^ny^{n-\ell}}
    \]
    evaluated at arbitrary positive $x,y >0$. We may therefore choose a classic saddle point bound by
    \[
        y_0=\frac{k!e^{k-1}}{k^kn^{k-1}}, \quad x_0=n.
    \]
    such that 
    \[
        x_0^{-n}y_0^{\ell-n} = \left(\frac{k!e^{k-1}}{k^kn^{k-1}}\right)^\ell\left(\frac{k^kn^{k-2}}{k!e^{k-1}}\right)^n.
    \]
    Next, we estimate the coefficients of $f(y)$ by the upper bound
    \[
        \frac{1}{i!}\binom{ki}{k,\dots,k} \leq \sqrt{k}\left(\frac{k^ki^{k-1}}{k!e^{k-1}}\right)^ie^{\frac{1}{24i}-\frac{1}{12i+1}} <  \sqrt{k}\left(\frac{k^ki^{k-1}}{k!e^{k-1}}\right)^i
    \]
    derived from applying Stirling's approximation. The evaluation of $f(y)$ at $y_0$ is therefore bounded by
    \begin{align*}
        f(y_0) &\leq 1+\sum_{i=2}^{\alpha}\sqrt{k}\left(\frac{k^ki^{k-1}}{k!e^{k-1}}\right)^i\left(\frac{k!e^{k-1}}{k^kn^{k-1}}\right)^{i-1} \\
        &\leq 1+\sqrt{k}\left(\frac{k^k}{k!e^{k-1}}\right)\left(\frac{4}{n}\right)^{k-1}+\sqrt{k}\left(\frac{k^kn^{k-1}}{k!e^{k-1}}\right)\sum_{i=3}^{\alpha}\left(\frac{i}{n}\right)^{(k-1)i}.
    \end{align*}
    Since $i \mapsto \left(\frac{i}{n}\right)^i$ has a unique minimum at $i=\frac{n}{e}$ and admits its maximum at the tails of the sum, we may estimate the summands by the maximum of the values for $i=3$ and $i=n-n^{1/3}$. That is,
    \begin{align*}
        f(y_0)
        &\leq 1+\frac{e}{\sqrt{2\pi}}\left(\frac{4}{n}\right)^{k-1}+\frac{en^{k-1}}{\sqrt{2\pi} } (n-n^{1/3})\left(\max\left\{\frac{27}{n^3}, \left(1+\frac{n^{1/3}}{n-n^{1/3}}\right)^{-(n-n^{1/3})}\right\}\right)^{k-1}\\
        &\leq 1+\frac{e}{\sqrt{2\pi}}\left(\frac{4}{n}\right)^{k-1} + \frac{e}{\sqrt{2\pi}}\left(\frac{27}{n}\right)^{k-1},
    \end{align*} 
    where we also used the fact that $k! \geq \sqrt{2k\pi}\left(\frac{k}{e}\right)^k$.
    Hence, for large $n$,
    \[
        x_0f_n(y_0) \leq n+O\left(\frac{1}{n^{k-2}}\right)
    \]
    and consequently, 
    \[
        [x^ny^{n-\ell}]F(x,y) \leq e^{n+O\left(\frac{1}{n^{k-2}}\right)}\left(\frac{k!e^{k-1}}{k^kn^{k-1}}\right)^\ell\left(\frac{k^kn^{k-2}}{k!e^{k-1}}\right)^n = e^{O(1)}\left(\frac{k!e^{k-1}}{k^kn^{k-1}}\right)^\ell\left(\frac{k^kn^{k-2}}{k!e^{k-2}}\right)^n.
    \]
    If we go back to the subsum $S_n(\alpha)$, we may therefore estimate it by
    \begin{align*}
        S_n(\alpha) &\leq e^{O(1)}\left(\frac{k^kn^{k-2}}{k!e^{k-2}}\right)^n\sum_{\ell =1}^{\alpha} \frac{\ell}{n}\frac{n!}{\ell!}\binom{k\ell}{k,\dots,k} \left(\frac{k!e^{k-1}}{k^kn^{k-1}}\right)^\ell\\
        &\leq e^{O(1)}\left(\frac{k^kn^{k-2}}{k!e^{k-2}}\right)^nn!\sum_{\ell =1}^{\alpha} \sqrt{k}\left(\frac{k^k\ell^{k-1}}{k!e^{k-1}}\right)^\ell\left(\frac{k!e^{k-1}}{k^kn^{k-1}}\right)^\ell
        \\
        &\leq e^{O(1)}\left(\frac{k^kn^{k-2}}{k!e^{k-2}}\right)^nn!\sum_{\ell =1}^{\alpha} \left(\frac{\ell}{n}\right)^
        {\ell(k-1)}\\
        &\leq \frac{e^{O(1)}}{n^{k-1}}\left(\frac{k^k}{k!}\right)^n \left(\frac{n^{k-2}}{e^{k-2}}\right)^n n! \\
        &\leq \frac{e^{O(1)}}{n^{k-1}}\left(\frac{k^k}{k!}\right)^n (n!)^{k-1}\\
        &= o\left(s_k(n,0)\right).
    \end{align*}
    We can finally conclude that
    \begin{align*}
        c_n^{(k)} 
        &= \sum_{c=0}^{n^{1/3}}s_k(n,c) + o\left(s_k(n,0)\right) \sim \begin{cases}
            2\sqrt{\frac{e^3}{\pi n}}2^nn! &\mbox{if } k=2\\
            2 \sqrt{\frac{k}{(2\pi n)^{k-1}}}\left(\frac{k^k}{k!}\right)^n(n!)^{k-1} &\mbox{if } k>2
        \end{cases}
    \end{align*}
    as $n\rightarrow \infty$.
\end{proof}

\section*{Acknowledgments}
The author was informed by Paul Gunnells that Mario DeFranco independently found the formula for the hypergraph Catalan numbers and computed their asymptotic expansion during his employment at UMass, but he left academia before formally writing up his results. She would also like to thank her supervisor Michael Drmota for the helpful discussions.


\bibliography{walks.bib}{}
\bibliographystyle{plain}
\end{document}